\newcommand{\bdism}{\begin{displaymath}}
\newcommand{\edism}{\end{displaymath}}
\newcommand{\rr}{\mathbb{R}}
\newcommand{\qq}{\mathbb{Q}}
\newcommand{\nn}{\mathbb{N}}
\newcommand{\pp}{\mathbb{P}}
\DeclareMathOperator{\Vol}{Vol}
\DeclareMathOperator{\NE}{\overline{NE}}
\newtheorem{theorem}{Theorem}[section]
\newtheorem{proposition}[theorem]{Proposition}
\newtheorem{lemma}[theorem]{Lemma}
\newtheorem{remark}[theorem]{Remark}
\newtheorem{definition}[theorem]{Definition}
\newtheorem{question}[theorem]{Question}
\newtheorem{conjecture}[theorem]{Conjecture}
\address{Department of Mathematics, Columbia University, New York NY 10027,
USA} \email{dicerbo@math.columbia.edu}
\address{Department of Mathematics, Notre Dame University, Notre Dame IN 46556,
USA}
\curraddr{Max Planck Institute for Mathematics, Vivatsgasse 7, 53111 Bonn, Germany}
\email{luca@mpim-bonn.mpg.de}
\author{Gabriele Di Cerbo and Luca F. Di Cerbo}
\title{On the canonical divisor of smooth toroidal compactifications}
\begin{document}

\maketitle
\pagestyle{headings}
\begin{abstract}
In this paper, we show that the canonical divisor of a smooth toroidal compactification of a complex hyperbolic manifold must be nef if the dimension is greater or equal to three. Moreover, if $n\geq 3$ we show that the numerical dimension of the canonical divisor of a smooth $n$-dimensional compactification is always bigger or equal to $n-1$. We also show that up to a finite \'etale cover all such compactifications have ample canonical class,  therefore refining a classical theorem of Mumford and Tai.  Finally, we improve in all dimensions $n\geq 3$ the cusp count for finite volume complex hyperbolic manifolds given in \cite{DiCerbo2}. 
\end{abstract}

\tableofcontents

\section{Introduction}
\markboth{Right}{\textbf{ON THE CANONICAL DIVISOR OF TOROIDAL COMPACTIFICATIONS}}
\pagenumbering{arabic}

In 1984 Hirzebruch constructed the first examples of smooth compactifications 
of complex hyperbolic manifolds with non-nef canonical divisors.
The surfaces constructed in \cite{Hirzebruch} are
blow-ups of a particular Abelian surface at certain configurations
of points. The construction given by Hirzebruch in \cite{Hirzebruch} is quite simple and elegant and it is of interest to determine whether or not this construction is generalizable
to higher dimensions. In fact, one of the important aspects of Hirzebruch's construction is that it explicitly provides a class of  
concrete examples in a field, complex hyperbolic geometry, where explicit examples are usually hard to find. Thus, any generalization of such a construction would be most welcome. One of the goals of this paper is to show that Hirzebruch's construction is peculiar to complex dimension two and it does not admit higher dimensional generalizations. Interestingly, this negative result follows from a fundamental difference between complex hyperbolic geometry in dimension two versus higher dimensions. The main result of the paper is the following theorem. 

\begin{theorem}\label{nef}
Let $(X,D)$ be a smooth toroidal compactification of a complex hyperbolic manifold of dimension $n\geq 3$. Then 
$K_{X}+\alpha D$ is ample for any $\alpha\in(0, 1)$. In particular, $K_{X}$ is always nef.
\end{theorem}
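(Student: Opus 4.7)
The identity
\[
K_X+\alpha D=\alpha\,(K_X+D)+(1-\alpha)\,K_X
\]
presents $K_X+\alpha D$ as a convex combination of the big and nef class $K_X+D$, which for a smooth ball-quotient toroidal compactification satisfies $\mathbf{B}_+(K_X+D)=D$ (it is pulled back from the ample $K_{X^*}$ on the Baily--Borel compactification), and $K_X$. It therefore suffices to prove $K_X$ is nef and then to upgrade the resulting nefness of $K_X+\alpha D$ to ampleness via Nakai--Moishezon--Kleiman. Since $K_X$ turns out to be ample along $D$, the real content is to establish $K_X\cdot C\ge 0$ for curves not contained in the boundary, and this is where the hypothesis $n\ge 3$ is essential.

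\textbf{Boundary adjunction.} The components $T_1,\dots,T_s$ of $D$ are pairwise disjoint abelian varieties, so adjunction gives
\[
K_X|_{T_i}=K_{T_i}-T_i|_{T_i}=-T_i|_{T_i}.
\]
Each $T_i$ is contracted to a cusp of the Baily--Borel compactification, so the conormal bundle $-T_i|_{T_i}$ is ample on $T_i$. Hence $K_X|_{T_i}$ is ample, and $(K_X+\alpha D)|_{T_i}=-(1-\alpha)\,T_i|_{T_i}$ is ample on $T_i$ for every $\alpha\in[0,1)$. Moreover, since $X\setminus D=\B^n/\Gamma$ is a free quotient of a bounded domain, by the maximum principle it contains no compact analytic subvariety of positive dimension; hence every compact curve $C\subset X$ meets $D$.

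\textbf{The main obstacle: nefness of $K_X$.} Suppose for contradiction that $K_X\cdot C<0$ for some curve. By the Cone Theorem one may take $C$ rational and spanning a $K_X$-negative extremal ray. The boundary adjunction rules out $C\subset D$, and the previous paragraph forces $C$ to meet $D$. Let $\nu\colon\pp^1\to C\subset X$ be the normalization and $S=\nu^{-1}(D)$ a nonempty finite set. The restriction $\pp^1\setminus S\to\B^n/\Gamma$ maps into a Kobayashi-hyperbolic target; the cases $|S|\le 2$ are immediately ruled out by the standard Brody--Picard argument (composing with $\exp\colon\cc\to\cc^*$ when $|S|=2$). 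The crux is the remaining case $|S|\ge 3$, where $n\ge 3$ is indispensable: using the local disc-bundle model of $(X,D)$ near each boundary abelian variety of dimension $n-1\ge 2$, the positivity of the ample conormal $-T_i|_{T_i}$, and the negatively curved complete K\"ahler--Einstein geometry on $\B^n/\Gamma$, one derives a contradiction from the existence of a rational curve passing through the cusp structure at least three times. This hyperbolicity-plus-positivity analysis in the higher-dimensional cusps is the heart of the proof and distinguishes $n\ge 3$ from Hirzebruch's dimension-two examples.

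\textbf{Conclusion via Nakai.} Once $K_X$ is nef, the convex-combination identity shows $K_X+\alpha D$ is nef. For each irreducible subvariety $V\subset X$ of dimension $m$, one verifies $(K_X+\alpha D)^m\cdot V>0$: if $V\subset T_i$, this is ampleness of $(K_X+\alpha D)|_{T_i}$; if $V\not\subset D$, expand
\[
(K_X+\alpha D)^m\cdot V=\sum_{k=0}^m\binom{m}{k}\alpha^k(1-\alpha)^{m-k}(K_X+D)^k\,K_X^{m-k}\cdot V,
\]
where each summand is non-negative as a product of nef classes, and the $k=m$ term $\alpha^m(K_X+D)^m\cdot V$ is strictly positive because $V\not\subset\mathbf{B}_+(K_X+D)=D$. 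Nakai--Moishezon--Kleiman then yields ampleness of $K_X+\alpha D$ for every $\alpha\in(0,1)$.
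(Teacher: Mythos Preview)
Your proposal has a false subsidiary claim and, more seriously, a genuine gap at the decisive step. The false claim is that $X\setminus D=\mathcal{H}^n/\Gamma$ ``contains no compact analytic subvariety of positive dimension'' by the maximum principle. A quotient of a bounded domain is not itself a bounded domain, and finite-volume complex hyperbolic manifolds can and do contain compact curves (for instance compact totally geodesic subvarieties). Fortunately this claim is unnecessary: that the $K_X$-negative extremal curve $C$ meets $D$ follows directly from $(K_X+D)\cdot C\ge 0$ together with $K_X\cdot C<0$, giving $D\cdot C>0$.

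The real gap is your ``main obstacle'' paragraph. Having reduced to a rational curve with $|\nu^{-1}(D)|\ge 3$, you assert only that ``one derives a contradiction'' from some unspecified ``hyperbolicity-plus-positivity analysis in the higher-dimensional cusps''. No mechanism is supplied, and a \emph{single} such rational curve is not contradictory: Hirzebruch's $(-1)$-curves in dimension two are precisely of this type. The paper exploits $n\ge 3$ in an entirely different way. One first shows that every nontrivial fiber $F$ of the extremal contraction $\phi_R$ satisfies $\dim F\le 1$ (any curve in $F$ has $D\cdot C>0$, hence cannot lie in $D$, so $\dim(F\cap D)=0$). Wi\'sniewski's classification then forces $\phi_R$ to be either a conic bundle or the blow-up of a smooth codimension-two centre $Z$. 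Now $n\ge 3$ guarantees a positive-dimensional \emph{family} of fibers: choose a curve $C$ in the base (respectively in $Z$), and compose with $\pi\colon X\to X^{*}$ to obtain $G\colon\pp^{1}\times C\to X^{*}$. The three or more boundary intersections become fixed points of this family on $X^{*}$, and bend-and-break (Lemma~\ref{bend}) forces a reducible or multiple fiber, contradicting the conic-bundle or blow-up structure (in the conic case one iterates over the discriminant). It is the family of rational curves, unavailable for an isolated $(-1)$-curve when $n=2$, that carries the argument. Your Nakai--Moishezon step for the ampleness of $K_X+\alpha D$ is correct, and arguably cleaner than the paper's appeal to an external positivity criterion, but it only runs once nefness of $K_X$ has actually been established.
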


Theorem \ref{nef} is unexpected since it implies that the theory
of smooth toroidal compactifications of ball quotients is somewhat easier in dimensions $\geq 3$.  
In particular, these varieties appear to be very simple from the
minimal model point of view, which is quite unusual in higher dimensions.\\

The construction of Hirzebruch is important for a second reason, namely it provides an infinite sequence of distinct smooth compactifications of Kodaira dimension zero. Recall that the problem of determining the Kodaira dimension of such smooth compactifications is a central problem in the theory of compactifications of locally symmetric varieties.  On this problem a fundamental result of Mumford and Tai ensures that a ``generic'' smooth compactification is of general type.

\begin{theorem}[Mumford-Tai \cite{Mumford}]
Let $\Gamma$ be a neat arithmetic group acting on a bounded symmetric domain $\mathcal{B}^{n}$ with $n\geq 2$. There exists a finite index subgroup $\Gamma_{0}\leq \Gamma$ such that for all $\Gamma_{1}\leq \Gamma_{0}$, the smooth compactification of $\mathcal{B}/\Gamma_{1}$, say $(X, D)$, is of general type. In other words, $K_{X}$ is big.
\end{theorem}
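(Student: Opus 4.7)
The plan is to upgrade the relatively easy bigness of $K_X + D$ (which one reads off from the Bergman metric) to bigness of $K_X$ itself, by passing to a deep enough subgroup so that the ramification of the covering along the boundary absorbs the difference. First I would invoke the Bergman metric on $\mathcal{B}^n$: it is $\Gamma$-invariant and descends to a singular Hermitian metric on the Mumford extension of the canonical bundle of $\mathcal{B}^n/\Gamma$, which is precisely $K_X + D$, with positive $(1,1)$-curvature current. By the Hirzebruch--Mumford proportionality principle, $(K_X + D)^n$ equals a positive dimensional constant times the Bergman volume of $\mathcal{B}^n/\Gamma$, so $K_X + D$ is big for every neat $\Gamma$. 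Since bigness is an open condition in the N\'eron--Severi group, there exists $\alpha_{0}<1$ such that $K_X + \alpha D$ remains big for all $\alpha\in[\alpha_{0},1]$.

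The next step is to analyze how the compactification behaves under finite index subgroups. For a tower $\Gamma_1 \leq \Gamma_0 \leq \Gamma$ the induced map $\pi\colon(X_1, D_1)\to(X_0, D_0)$ is \'etale over the open part $X_0\setminus D_0$, and locally at a cusp is a covering of the relevant semi-abelian torus fibres. If $r_i$ denotes the ramification index of $\pi$ along an irreducible component $D_{0,i}$, the Hurwitz formula gives
\[
K_{X_1} = \pi^{*}K_{X_0} + \sum_i(r_i-1)\, D_{1,i},
\]
while $\pi^{*}D_{0,i} = r_i D_{1,i}$. Combining these two identities yields the $\qq$-numerical equality
\[
K_{X_1} = \pi^{*}\Bigl(K_{X_0} + \sum_i\bigl(1-\tfrac{1}{r_i}\bigr)D_{0,i}\Bigr).
\]

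Finally I would choose $\Gamma_0$ so that every $\Gamma_1\leq\Gamma_0$ forces $r_i\geq 1/(1-\alpha_0)$ on every boundary component simultaneously. Such a $\Gamma_0$ is provided by the arithmetic hypothesis: each cusp of $\mathcal{B}^n/\Gamma$ corresponds to a rational parabolic subgroup whose unipotent radical contains the lattice defining the toroidal boundary, and passing to a sufficiently deep principal congruence subgroup forces that lattice to sit inside an arbitrarily divisible sublattice, this divisibility factor being exactly the ramification index along the corresponding component of $D$. With such $\Gamma_0$ fixed, the displayed identity shows that $K_{X_1}$ is the pullback of a big divisor, hence big, for every neat $\Gamma_1\leq\Gamma_0$, which is the conclusion. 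The main technical obstacle I anticipate is arranging uniform control of the ramification indices over all cusps and over arbitrary subgroups $\Gamma_1$; this uniformity is ultimately what the neat arithmetic hypothesis delivers, via the rigidity of the rational parabolic subgroups of $\aut(\mathcal{B}^n)$.
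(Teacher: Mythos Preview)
The paper does not give its own proof of this statement: it is quoted as the classical theorem of Mumford and Tai, with reference to \cite{Mumford}, and serves as background for the paper's Theorem~\ref{Mum}, which strengthens the conclusion from bigness to ampleness in the complex hyperbolic case. So there is no proof here against which to compare directly. Your sketch is essentially the original Mumford argument, and the paper's proof of the related Theorem~\ref{Mum} runs along a parallel scheme (Hurwitz formula plus control of ramification along the boundary), differing mainly in that it feeds in the ampleness of $K_{X'}+\alpha D'$ from Theorem~\ref{nef} rather than the openness of bigness, and uses residual finiteness in place of congruence subgroups to produce large ramification.

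There is, however, a genuine slip in your setup. You take $\pi\colon(X_1,D_1)\to(X_0,D_0)$ and write
\[
K_{X_1}=\pi^{*}\Bigl(K_{X_0}+\sum_i(1-\tfrac{1}{r_i})D_{0,i}\Bigr)
\]
with $r_i$ the ramification indices of \emph{this} map, and then propose to choose $\Gamma_0$ so that every $\Gamma_1\leq\Gamma_0$ forces $r_i\geq 1/(1-\alpha_0)$. But these $r_i$ measure ramification of $X_1\to X_0$, and for $\Gamma_1=\Gamma_0$ they are all equal to $1$; no choice of $\Gamma_0$ can make them uniformly large over all subgroups. The map you should be using is the one down to the \emph{original} compactification $(X,D)$ of $\mathcal{B}^n/\Gamma$: choose $\Gamma_0$ (a sufficiently deep congruence subgroup) so that the ramification indices of $X_0\to X$ along every boundary component are at least $N=\lceil 1/(1-\alpha_0)\rceil$. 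Then for any $\Gamma_1\leq\Gamma_0$ the composite $X_1\to X_0\to X$ has all ramification indices $\geq N$ as well, and the Hurwitz identity (now for $X_1\to X$) exhibits $K_{X_1}$ as the pullback of $K_X+\alpha_0 D$ plus an effective divisor, hence big. Equivalently, once $K_{X_0}$ is shown to be big, any further cover $\pi\colon X_1\to X_0$ satisfies $K_{X_1}=\pi^{*}K_{X_0}+R_1$ with $R_1\geq 0$, and bigness persists. (A smaller point: your displayed identity tacitly assumes a single component of $D_1$ over each $D_{0,i}$ with a single ramification index; in general several components with distinct indices can lie over one $D_{0,i}$, but the inequality $K_{X_1}\geq \pi^{*}(K_X+\alpha_0 D)$ still goes through component by component.) With these corrections the argument is the standard one and is sound.
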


As a first application of Theorem \ref{nef}, we can improve Mumford's theorem when the bounded locally symmetric domain is the complex hyperbolic space $\mathcal{H}^{n}$.

\begin{theorem}\label{Mum}
Let $\Gamma$ be a torsion free lattice acting on $\mathcal{H}^{n}$ with $n\geq 2$. There exists a finite index subgroup $\Gamma_{0}\leq \Gamma$ such that for all $\Gamma_{1}\leq \Gamma_{0}$, the smooth compactification of $\mathcal{H}/\Gamma_{1}$, say $(X, D)$, has ample canonical class. In other words, $K_{X}$ is ample.
\end{theorem}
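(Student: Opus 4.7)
The plan is to bootstrap Theorem~\ref{nef} via the Mumford-Tai theorem stated above. First, applying Mumford-Tai to $\Gamma$ gives a finite-index subgroup $\Gamma' \leq \Gamma$ such that every $\Gamma_1 \leq \Gamma'$ yields a smooth toroidal compactification $(X_1, D_1)$ with $K_{X_1}$ big. I would then replace $\Gamma'$ by a neat normal subgroup $\Gamma_0 \leq \Gamma'$ chosen so that $\Gamma_0$ intersects every parabolic subgroup of $\Gamma'$ in a proper sublattice. This guarantees that for any $\Gamma_1 \leq \Gamma_0$, the induced finite morphism of smooth toroidal compactifications $\pi_1 \colon X_1 \to X'$ has ramification index $e_i \geq 2$ along every boundary component $D_{1,i}$. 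Such a $\Gamma_0$ exists because the parabolic subgroups are finitely generated.

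Fix $\Gamma_1 \leq \Gamma_0$. By Theorem~\ref{nef}, $K_{X_1}$ is nef and $K_{X_1} + \alpha D_1$ is ample for every $\alpha \in (0,1)$, and the same holds on $X'$. Since $K_{X_1}$ is nef and big (from Mumford-Tai applied to $\Gamma_1 \leq \Gamma'$), the Basepoint Free Theorem produces a birational morphism $f \colon X_1 \to Y$ contracting exactly the curves on which $K_{X_1}$ vanishes. It therefore suffices to show that no such curves exist.

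For a curve $C$ lying inside a boundary component $D_{1,j}$, adjunction gives $K_{X_1}|_{D_{1,j}} = -D_{1,j}|_{D_{1,j}}$, since the smooth Abelian variety $D_{1,j}$ has trivial canonical class. The normal bundle $-D_{1,j}|_{D_{1,j}}$ is the polarization on $D_{1,j}$ induced by the positive-definite Hermitian form defining the cusp's Heisenberg group, hence ample; so $K_{X_1} \cdot C > 0$.

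For $C$ not contained in $D_1$, apply the ramification formula $K_{X_1} = \pi_1^* K_{X'} + \sum_i (e_i - 1) D_{1,i}$. Nefness of $K_{X'}$ gives $\pi_1^* K_{X'} \cdot C \geq 0$, while $D_{1,i} \cdot C \geq 0$ for each $i$ since $C \not\subset D_{1,i}$. If $K_{X_1} \cdot C = 0$, both nonnegative contributions must vanish, and since $e_i - 1 \geq 1$ we deduce $D_{1,i} \cdot C = 0$ for every $i$. But then $\pi_1^*(K_{X'} + \alpha D') \cdot C = 0$, contradicting ampleness of $K_{X'} + \alpha D'$ pulled back under the finite morphism $\pi_1$. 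The hardest parts of the argument are arranging the ramification bound $e_i \geq 2$ in the first step, and the classical verification that the normal bundle of each boundary Abelian variety is anti-ample; both are standard in the toroidal theory but require some care with the cusp uniformization.
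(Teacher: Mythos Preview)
Your strict-nefness argument (the trichotomy on curves, using the Hurwitz formula and the ampleness of $K_{X'}+\alpha D'$) is essentially the same as the paper's. The genuine divergence is in how you obtain \emph{bigness} of $K_{X_1}$, and this is where a real gap appears.

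You invoke the Mumford--Tai theorem to get bigness. But as stated in this paper, Mumford--Tai applies to \emph{neat arithmetic} groups, whereas Theorem~\ref{Mum} is asserted for an arbitrary torsion-free lattice $\Gamma\subset\mathrm{PU}(n,1)$; part of the point of the theorem is that it covers the non-arithmetic case. The paper avoids this entirely: once the branched cover $p\colon(X_0,D_0)\to(X',D')$ has ramification $r_i\ge 2$ along every component, nefness of $K_{X'}$ (from Theorem~\ref{nef}) gives $(p^{*}K_{X'})^{n}\ge 0$, and expanding $(L-\sum r_iD_i)^n$ with $L=K_{X_0}+D_0$ yields $L^n>\sum(-1)^{n-1}D_i^n$, hence $K_{X_0}^{n}=L^n+\sum(-1)^nD_i^n>0$. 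Bigness thus comes for free from Theorem~\ref{nef} and the ramification, with no appeal to Mumford--Tai. You should replace your bigness step with this computation.

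Two smaller issues. First, you do not treat $n=2$: your entire argument rests on Theorem~\ref{nef}, which requires $n\ge 3$; the paper handles $n=2$ by citing the second author's earlier work. Second, your construction of $\Gamma_0$ is imprecise. Ramification along a boundary component is governed by the index of the intersection with the \emph{center} $C_i$ of the Heisenberg group, not with the full parabolic $\Gamma_i$; a proper sublattice of $\Gamma_i$ can still meet $C_i$ in the same cyclic group. The paper uses residual finiteness of $\Gamma'$ to choose $\Gamma_0$ avoiding the finite set $\{\alpha_i,\alpha_i^2,\ldots,\alpha_i^{l}\}$ of small powers of the central generators, which forces the center index (and hence each $e_i$) to exceed~$1$.
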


More interestingly, Theorem \ref{nef} can be used to study the Kodaira dimension and the numerical dimension of a smooth toroidal compactification without the need of passing to a finite \'etale cover. Recall that the numerical dimension $\nu(D)$ of a nef divisor $D$ is the largest integer $k$ such that $[D^{k}]\in H^{2k}(X,\qq)$ is not zero. For varieties with nef canonical divisor, we denote by $\nu(X):=\nu(K_{X})$. We can then prove the following.

\begin{proposition}\label{numerical}
Let $(X,D)$ be a smooth toroidal compactification of a complex hyperbolic manifold of dimension $n\geq3$. Then $\nu(X)\geq n-1$.
\end{proposition}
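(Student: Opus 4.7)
My plan is to exhibit a divisor class against which $K_X^{n-1}$ pairs strictly positively; the natural candidate is the boundary divisor $D$ itself. Since $K_X$ is nef by Theorem \ref{nef}, showing $\nu(X)\geq n-1$ is equivalent to showing that $[K_X^{n-1}]\in H^{2(n-1)}(X,\qq)$ is nonzero, and for this it suffices to produce some effective divisor $E$ with $K_X^{n-1}\cdot E > 0$.

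The first step is to collect the relevant facts about the boundary. For a smooth toroidal compactification of a complex hyperbolic manifold, the boundary $D=\bigsqcup_i D_i$ is a disjoint union of smooth irreducible divisors, each of which is a finite \'etale quotient of an abelian variety of dimension $n-1$; in particular $K_{D_i}\equiv 0$. Adjunction together with the disjointness $D_j|_{D_i}=0$ for $j\neq i$ then yields
\bdism
 D_i|_{D_i} \equiv -K_X|_{D_i}, \qquad (K_X+\alpha D)|_{D_i} \equiv (1-\alpha)\, K_X|_{D_i}.
\edism

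Next I would invoke Theorem \ref{nef}: for any $\alpha\in(0,1)$ the divisor $K_X+\alpha D$ is ample, so its restriction to $D_i$ is ample as well. Dividing by the positive constant $1-\alpha$, this forces $K_X|_{D_i}$ to be ample on $D_i$ for every $i$. Since $\dim D_i = n-1$, ampleness gives $(K_X|_{D_i})^{n-1}>0$, i.e.\ $K_X^{n-1}\cdot D_i > 0$. Summing over the components produces $K_X^{n-1}\cdot D > 0$, and in particular $[K_X^{n-1}]\neq 0$ in $H^{2(n-1)}(X,\qq)$, which is exactly the statement $\nu(X)\geq n-1$.

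I do not expect any real obstacle beyond the input from Theorem \ref{nef}: once that theorem is in hand, the whole argument is a one-line restriction-to-the-boundary computation combined with the fact that boundary components of toroidal ball quotient compactifications have numerically trivial canonical class. The conceptual point is precisely that Theorem \ref{nef} is strong enough to upgrade nefness of $K_X$ on $X$ to ampleness of $K_X|_{D_i}$ on each $(n-1)$-dimensional boundary stratum, and it is this upgrade that automatically produces a nonzero top self-intersection in codimension one less than $n$.
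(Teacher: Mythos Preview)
Your argument is correct, and it takes a genuinely different route from the paper's proof. The paper works directly with Definition~\ref{ndim}: it sets $A=2K_X+D$ (ample by Theorem~\ref{nef}) and, writing $L=K_X+D$, expands
\[
K_X^{k}\cdot A^{\,n-k}=(L-D)^{k}\cdot(2L-D)^{n-k}=2^{\,n-k}L^{n}+(-1)^{n}D^{n},
\]
using $L|_D\cong\mathcal{O}_D$ to kill the mixed terms. Combining this with $K_X^{n}=L^{n}+(-1)^{n}D^{n}\geq 0$ yields $K_X^{k}\cdot A^{\,n-k}\geq(2^{\,n-k}-1)L^{n}>0$ for every $k<n$, hence $\nu(X)\geq n-1$.

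Your approach instead restricts to a boundary component: from $(K_X+\alpha D)|_{D_i}=(1-\alpha)K_X|_{D_i}$ and the ampleness of $K_X+\alpha D$ you extract that $K_X|_{D_i}$ is ample, so $K_X^{\,n-1}\cdot D_i>0$ and $[K_X^{\,n-1}]\neq 0$. This is cleaner and more geometric, and it isolates the pleasant byproduct that the conormal bundle $-D_i|_{D_i}$ is genuinely ample (not merely strictly nef). The paper's computation, on the other hand, produces the explicit quantitative lower bound $K_X^{k}\cdot A^{\,n-k}\geq(2^{\,n-k}-1)(K_X+D)^{n}$ for every $k\leq n-1$, which is a bit more information than the bare inequality $\nu(X)\geq n-1$. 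Both proofs rest on exactly the same input, namely Theorem~\ref{nef}.
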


In general, the Kodaira and numerical dimensions of a nef divisor do not agree. Nevertheless, it is conjectured that they are always the same for canonical divisors. We refer the interested reader to the book \cite{Kol92} for the history and known partial results  concerning the following fundamental conjecture in algebraic geometry.   

\begin{conjecture}[Abundance] \label{abundance}
Let $X$ be a smooth variety with nef canonical divisor. Then $\kappa(X)=\nu(X)$.  
\end{conjecture}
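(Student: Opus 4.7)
The Abundance Conjecture is one of the outstanding open problems in the Minimal Model Program, so any proposal here is necessarily a sketch of the standard program rather than a route to a complete proof. The plan is to reduce the equality $\kappa(X) = \nu(X)$ to a non-vanishing statement and then bootstrap it up the Iitaka fibration using the canonical bundle formula and semipositivity of direct images.

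First I would record the reductions. In dimensions at most three the conjecture is a theorem (Miyaoka for surfaces, and Miyaoka--Kawamata for threefolds), so one may assume $\dim X \geq 4$. The inequality $\kappa(X) \leq \nu(X)$ holds automatically for any nef divisor, so the content of the conjecture is the reverse inequality $\kappa(X) \geq \nu(X)$, and in particular the non-vanishing statement $\kappa(X) \geq 0$. The central attack on the latter goes through the \emph{non-vanishing conjecture}, which asserts that a nef canonical divisor admits an effective pluricanonical multiple; the standard approach produces sections via Kawamata--Viehweg vanishing applied to asymptotic multiplier ideals built from a nef reduction of $K_X$, together with extension arguments to lift sections off strata where positivity degenerates.

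Once a section $D \in |mK_X|$ has been produced, one passes to the Iitaka fibration $f \colon X \dashrightarrow Y$ and uses the Fujino--Mori canonical bundle formula to write $K_X \equiv f^*(K_Y + B_Y + M_Y)$, with $M_Y$ a nef moduli part and $B_Y$ an effective discriminant. Semipositivity of $M_Y$ (Fujita--Kawamata--Viehweg) and an inductive application of log abundance on $(Y, B_Y + M_Y)$ in smaller dimension then match the numerical and Kodaira dimensions of $X$. The main obstacle is precisely the non-vanishing step: when $\nu(X) < n$ there is no known geometric construction producing a single pluricanonical section out of mere nefness of $K_X$, and it is this missing input that has kept the conjecture open for decades. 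Any honest plan must concede that the program stalls here; progress in the setting of this paper (ball quotients and their toroidal compactifications) would consist in exploiting the extra structure coming from the hyperbolic metric and the cusp divisor $D$ to force non-vanishing, rather than resolving the general conjecture.
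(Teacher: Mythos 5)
There is nothing to compare your proposal against: the statement you were given is the Abundance Conjecture, which the paper records precisely as a \emph{conjecture} and does not prove. The authors only ever use it in two ways, namely hypothetically (Remark \ref{remark}, which is conditional on Conjecture \ref{abundance}) and through the known low-dimensional case $\dim(X)\leq 3$, cited from \cite{Kol92}, which is what powers Proposition \ref{three}. Your write-up is accurate on exactly these points: $\kappa\leq\nu$ is automatic for a nef canonical divisor, the content is the reverse inequality with non-vanishing as the crux, the surface and threefold cases are theorems, and the standard program via the Iitaka fibration, the canonical bundle formula, and semipositivity of the moduli part stalls precisely at producing a single pluricanonical section when $0<\nu(X)<\dim X$. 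Your honest concession that the argument cannot be completed is the correct conclusion, not a defect. The only substantive addition worth making in the context of this paper is the one you gesture at in your last sentence: for toroidal compactifications of ball quotients the extra structure (nefness of $K_{X}$ from Theorem \ref{nef}, ampleness of $K_{X}+\alpha D$, and the lower bound $\nu(X)\geq n-1$ of Proposition \ref{numerical}) reduces the question to distinguishing $\nu=n-1$ from $\nu=n$, and the paper points to \cite{BT} for progress in exactly this direction; but none of this amounts to, nor is claimed by the authors to amount to, a proof of the general conjecture.
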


If Conjecture \ref{abundance} holds true, Proposition \ref{numerical} implies the following result.

\begin{remark}\label{remark}
Assume Conjecture \ref{abundance}. Let $(X,D)$ be a smooth toroidal compactification of a complex hyperbolic manifold of dimension $n\geq3$. Then the Kodaira dimension of $X$ is bigger than or equal to $n-1$.
\end{remark}

Since the abundance conjecture is known to be true for $\dim(X)\leq 3$, see \cite{Kol92}, we can collect a proposition regarding threefolds which are smooth compactifications of ball quotients. Surprisingly, it seems we are still missing an explicit example of a smooth three dimensional compactification of a ball quotient. It is our hope 
that the next proposition will help in the construction of such examples. 

\begin{proposition}\label{three}
Let $(X,D)$ be a smooth toroidal compactification of a complex hyperbolic manifold of dimension $n=3$. Then $\kappa(X)\geq 2$.
\end{proposition}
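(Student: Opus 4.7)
The plan is to obtain Proposition \ref{three} as an immediate consequence of the preceding results together with the known validity of the abundance conjecture in dimension three. The point is that all the real work has already been done: Theorem \ref{nef} gives nefness of $K_X$, Proposition \ref{numerical} pins down the numerical dimension from below, and for threefolds abundance is a theorem rather than a conjecture.

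More precisely, I would argue as follows. Since $n=3\geq 3$, Theorem \ref{nef} applies and ensures that $K_X$ is nef. Since $n=3$ again, Proposition \ref{numerical} yields the numerical lower bound $\nu(X)=\nu(K_X)\geq n-1=2$. At this stage the statement has been reduced to the abundance conjecture for the smooth projective threefold $X$ with nef canonical divisor.

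Here I would invoke the fact that Conjecture \ref{abundance} is a theorem in dimension $\leq 3$, due to work of Miyaoka, Kawamata and others (as recorded in \cite{Kol92}). Applying this to our $X$ gives $\kappa(X)=\nu(X)$, and combining with the lower bound on $\nu(X)$ from the previous step yields $\kappa(X)\geq 2$, as required.

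There is essentially no obstacle in this proof beyond quoting the correct statements: the only subtlety is verifying that $X$ satisfies the hypotheses of the three-dimensional abundance theorem, namely smoothness and projectivity together with nefness of $K_X$. Smoothness is part of the definition of a smooth toroidal compactification, projectivity follows because toroidal compactifications of ball quotients are projective (the boundary divisor $-D$ is relatively ample over the Baily-Borel compactification, and ampleness of $K_X+\alpha D$ from Theorem \ref{nef} gives projectivity directly), and nefness is Theorem \ref{nef} itself. This completes the sketch.
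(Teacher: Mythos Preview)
Your proposal is correct and follows exactly the route the paper intends: Proposition \ref{three} is stated immediately after noting that abundance holds in dimension $\leq 3$ (see \cite{Kol92}), so that Remark \ref{remark} (which is Proposition \ref{numerical} plus Conjecture \ref{abundance}) applies unconditionally when $n=3$. Your added remarks on checking smoothness and projectivity are fine but inessential, since these are built into the standing hypotheses on smooth toroidal compactifications.
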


Because of Remark \ref{remark} and the lack of counterexamples,  it seems interesting to ask if all smooth compactifications of higher dimensional ball quotients have ample canonical line bundle. 

\begin{question}\label{question}
Can we find a smooth toroidal compactification of a ball quotient with $dim(X)\geq 3$ and $K_{X}$ not ample?
\end{question}
\begin{remark}
In a recent preprint \cite{BT}, Bakker and Tsimerman made considerable progress towards a final answer of Question \ref{question}.
\end{remark}

The importance of Theorem \ref{nef} is not limited to the study of the Kodaira dimension of a smooth compactification. Recall in fact that in \cite{DiCerbo2}, we have extensively shown that a weaker form of Theorem \ref{nef} (see Theorem 1.1 in \cite{DiCerbo2}) can be successfully applied to study the geometry of finite volume complex hyperbolic manifolds. More precisely, it is shown how such theorem can be used to derive effective versions of classical results such as the Baily-Borel embedding theorem, Wang's finiteness theorem for complex hyperbolic manifolds, bounds on the numbers of cuspidal ends, effective very ampleness results for smooth toroidal compactifications and bounds on the Picard numbers of such compactifications.

Theorem \ref{nef} presented in this paper can be used to strengthen only one of the results previously derived in \cite{DiCerbo2}.
More precisely, we can prove a better upper bound on the number of cuspidal ends of a complex hyperbolic manifold in terms of its normalized Riemannian volume. For the precise statement of this result we refer to Theorem \ref{cusps} in Section \ref{Applications}. This bound is currently the best bound available in the literature in dimensions $3\leq n\leq 23$.  For $n\geq 24$ the bound derived by Hwang in \cite{Hwa1} is better, while for $n=2$ we have previously found a sharp result, see \cite{DiCerbo1}.\\

The paper is organized as follows. In Section \ref{hyperbolic}, we recall the theory of finite volume complex hyperbolic manifolds and their compactifications. Moreover, we add a new result showing that smooth toroidal compactifications of ball quotients are canonical, being unique up to biholomorphism. It seems that this result has been previously observed in the literature in dimension two only. Thus, we have decided to explicitly give a complete proof here. For the details see Proposition \ref{unico}. In Section \ref{bend}, we recall some foundational concepts and theorems from the theory of the minimal model, such as the bend and break theorem. Moreover, we recall some basic results regarding the numerical dimension of a nef divisor which are needed in the proof of Proposition \ref{numerical}. In Section \ref{nefness}, we provide the details of the proof of Theorem \ref{nef}. In Section \ref{Applications}, we collect quite different applications of Theorem \ref{nef}. First, we prove an upper bound on the number of cusps of a finite volume complex hyperbolic manifolds which improves in all dimension the one given in \cite{DiCerbo2}.  Second,  we give a proof of Proposition \ref{numerical}. More precisely, we use Theorem \ref{nef} to estimate the numerical dimension of the canonical divisor of a smooth toroidal compactification. Finally, we give a proof of Theorem \ref{Mum} which ensures that, up to a finite \'etale cover, all such compactifications have ample canonical class.

\vspace{0.5cm}

\noindent\textbf{Acknowledgements}. The first named author would
like to thank Professor J\'anos Koll\'ar and Roberto Svaldi for many constructive comments. The second author would like to thank Mike Roth for an useful discussion. He also thanks the Max Planck Institute for Mathematics for the hospitality and support during the final stages of this project.

\section{Preliminaries, Notations and a Uniqueness Result}\label{preliminaries}

\subsection{Hyperbolic manifolds, their compactifications and their uniqueness}\label{hyperbolic}

Let $\mathcal{H}^{n}$ be the complex $n$-dimensional hyperbolic
space of dimension $n\geq2$. Let $X^{o}$ be a metrically complete
non-compact complex hyperbolic manifold of finite volume. It is well known that $X^{o}:=\mathcal{H}^{n}/\Gamma$ where
$\Gamma$ is torsion-free lattice in $\textrm{PU}(n,1)$. Since $X^{o}$ is non-compact then $\Gamma$ must contain
parabolic elements. Then $X^{o}$ has finitely
many disjoint unbounded ends of finite volume called the cusps of
$X^{o}$.  It is known that when the parabolic elements in $\Gamma$ have no rotational part, then $X^{o}$ admits a compactification $(X, D)$ consisting of a smooth projective variety and an exceptional divisor $D$. Recall that each maximal parabolic subgroup can be thought as a lattice in $\textrm{H}\rtimes \textrm{U}(n-1)$ where $\textrm{H}$ is a Heisenberg type Lie group of real dimension $2n-1$, and that a parabolic isometry is said to have no rotational part if it has no $\textrm{U}(n-1)$ component. The pair $(X, D)$ is known as the toroidal compactification of $X^{o}$. For the detailed construction of the compactifications $(X, D)$, we refer to the book \cite{Ash} and to the paper \cite{Mok}. For a more detailed introduction, the interested reader may also refer to Section 1.1 in \cite{DiCerbo2}. 

Let us recall the geometric features of $(X, D)$ which will be needed in the remaining of this work. First, the pair $(X, D)$ is by construction a resolution of the Baily-Borel \cite{Borel1}  compactification $X^{*}$ of $X^{o}$. Recall that $X^{*}$ is a normal projective variety
such that the complement of $X^{o}$ in $X^{*}$ consists of only finitely many (singular) points, called cusp points. When $\Gamma$ in non-arithmetic, the compactification $X^{*}$ has been constructed in \cite{Siu}, see also \cite{Mok}. Moreover, the exceptional divisor $D$ consists of disjoint smooth Abelian varieties with negative normal bundle in $X$. Thus, the irreducible components of $D$ are in one-to-one correspondence with the cusps of $X^{o}$ or equivalently with the cusp points in $X^{*}$. Finally, we have nice positivity properties for the log-canonical divisor of the pair $(X, D)$. 

\begin{proposition}\label{positivity}
 Let $(X,D)$ be a smooth toroidal compactification of a complex hyperbolic manifold of dimension $n\geq2$.  Then $K_{X}+D$ is big, nef and strictly nef outside $D$.
\end{proposition}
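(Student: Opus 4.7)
The plan is to route everything through the Baily--Borel compactification $X^{*}$ of $X^{o}$ and reduce the three properties to the ampleness of $K_{X^{*}}$ via a crepant pull-back identity.

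The Baily--Borel theorem (see \cite{Borel1}, and \cite{Siu,Mok} for the non-arithmetic case) identifies $X^{*}$ with the projective spectrum of the pluricanonical ring of $X^{o}$; in particular $K_{X^{*}}$ is an ample $\qq$-Cartier divisor on the normal projective variety $X^{*}$. Let $\pi\colon X\to X^{*}$ be the toroidal resolution, which contracts each connected component of $D$ to a single cusp point of $X^{*}$. The key step is to establish
\[
K_{X}+D=\pi^{*}K_{X^{*}}.
\]
This is a local computation at each cusp: in the Siegel-domain coordinates adapted to a cusp, the toroidal chart is modeled on a torus embedding with $D$ as the toric boundary, and a direct check (equivalently, the asymptotic analysis of the Bergman volume form near $D$) shows that the discrepancy of $D$ over $X^{*}$ equals $-1$.

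Granted this identity, the three conclusions follow formally. Nefness is immediate because $\pi^{*}K_{X^{*}}$ is the pull-back of a nef class. For bigness, $\pi$ is birational and $K_{X^{*}}$ is ample on the $n$-dimensional projective variety $X^{*}$, so
\[
(K_{X}+D)^{n}=K_{X^{*}}^{n}>0.
\]
For strict nefness outside $D$, pick any irreducible curve $C\not\subset D$; then $\pi(C)$ is an irreducible curve in $X^{*}$, and the projection formula gives
\[
(K_{X}+D)\cdot C=K_{X^{*}}\cdot\pi_{*}C>0
\]
by ampleness of $K_{X^{*}}$.

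The main obstacle is the crepant identity, which requires the explicit local structure of the toroidal compactification at the cusps; everything else is a formal consequence. A more analytic alternative that bypasses the discrepancy computation is to transport the Bergman Kähler--Einstein metric of $X^{o}$, which has negative Ricci form, to a good singular hermitian metric (in the sense of Mumford) on $K_{X}+D$. Its curvature current is a smooth, strictly positive $(1,1)$-form on $X\setminus D$ whose top self-intersection equals a positive multiple of the finite number $\vol(X^{o})$; nefness, bigness, and strict nefness off $D$ can then be read off from this curvature positivity in a single stroke.
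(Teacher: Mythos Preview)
Your proposal is correct and mirrors the paper's own proof, which also offers the same two routes: the pull-back identity $K_{X}+D=\pi^{*}K_{X^{*}}$ with $K_{X^{*}}$ ample (citing Mumford \cite{Mumford}), and alternatively the K\"ahler--Einstein metric on $X^{o}$ viewed as a strictly positive K\"ahler current on $X$ with singular support $D$. You present them in the reverse order and with more detail on the formal deductions, but the content is the same.
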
 

\begin{proof}
It can be shown that the standard locally symmetric K\"ahler-Einstein metric on $X^{o}$, when regarded as a current on $X$, is a strictly positive K\"ahler current with singular support exactly $D$. The proof then follows easily, for more details see \cite{DiCerbo2}. Alternatively, one can show that $K_{X}+D$ is the pull back of an ample line bundle on the Baily-Borel compactification $X^{*}$ via the map $\pi: X\rightarrow X^{*}$, see Proposition 3.4  in \cite{Mumford}. Finally, we refer to \cite{Yau} for a detailed study of the positivity properties of the log-canonical divisor of general smooth toroidal compactifications. Among many other things, in \cite{Yau} it is shown that $K_{X}+D$ can never be ample for smooth toroidal compactifications of Shimura varieties.
\end{proof}

From now on, we always assume the lattice $\Gamma\in \textrm{PU}(n,1)$ to be non-uniform, torsion free and with rotation free parabolic elements.

For most of the arguments presented in this work this is all we need to know on $(X, D)$. Nevertheless, for the proof of Theorem \ref{Mum} we need to discuss the cusps of $X^{o}$ a bit more and their ``filling'' in $(X, D)$. Thus, given $X^{o}$ as above let us denote by $ A_{1}$, ..., $A_{m}$ its cusps. Recall that the cusps are in one-to-one correspondence with maximal parabolic subgroups of $\Gamma$ say $\Gamma_{1}$, ..., $\Gamma_{m}$. Given any $A_{i}$, the horobal fixed by the corresponding $\Gamma_{i}$ can be identified with a Heisenberg type group $\textrm{H}_{i}$ so that $A_{i}$ is isomorphic to 
$\textrm{H}_{i}/\Gamma_{i}\times [0, \infty)$, where now $\textrm{H}_{i}/\Gamma_{i}$ is a nilmanifold since $\Gamma_{i}$ is a lattice of left translations in $\textrm{H}_{i}$. Concretely, $\textrm{H}_{i}/\Gamma_{i}$ is a non-trivial $S^{1}$-bundle over a complex $(n-1)$-dimensional torus say $D_{i}$. The union of those $D_{i}$'s is the divisor $D$ in the compactification $(X, D)$. For any $i$, let us observe that $\textrm{C}_{i}=[\textrm{H}_{i}, \textrm{H}_{i}]$
is the center of $\textrm{H}_{i}$ with $\textrm{C}_{i}$  isomorphic to $\rr$, so that the centers of the maximal parabolic subgroups $\Gamma_{i}$'s are lattices in $\rr$ generated by a single element say $\alpha_{i}\in[\Gamma_{i}, \Gamma_{i}]$ of minimal length. For much more on this construction we refer to Section 3 in \cite{Hummel} and to the bibliography therein. Concluding, we would like to point out that the theory developed \cite{Hummel} is independent of the arithmeticity of 
the lattices in $\textrm{PU}(n,1)$. This is of some interest since the arithmeticity assumption is crucially used in the constructions presented in \cite{Ash}. This technical point is also lucidly discussed in \cite{Mok}.\\

We conclude this section by addressing the uniqueness of smooth toroidal compactifications. Thus, let $X^{o}$ let be a complex hyperbolic manifold associated with a non-uniform rotation free lattice in $\textrm{PU}(n,1)$. The cusp closing construction of Hummel-Schroeder \cite{Hummel} and Mok \cite{Mok} then provides a compactification $X$ with compactifying divisor $D$ given by a disjoint union of abelian varieties. From this differential geometric point of view, it is not immediately clear that the pair $(X,D)$ is uniquely determined by $X^{o}$. Let us first observe that by construction the pair $(X, D)$ is a resolution of the singularities of $X^{*}$. Thus, if we assume $dim(X)=2$ it then follows that such a compactification must be unique. In fact, the exceptional divisor $D$ does not contain any rational curve and then $X$ is a minimal resolution of $X^{*}$. Recall that in complex dimension two any normal surface admits a unique minimal resolution, see for example Theorem 6.2 in \cite{Bar}. In higher dimensions the proof is somewhat different due to the lack of minimal resolution. The key result is the following lemma on rational maps and their locus of indeterminacy. 

\begin{lemma}\label{defined}
Let $Y$ be a smooth variety and let $g: Y \dashrightarrow X$ be a rational map. Let $Z\subseteq Y\times X$ be the closure of the graph of $g$ and let $p$ and $q$ be the first and second projections. Let $S\subset Y$ be the set where $g$ is not defined. Then for any point $z\in q(p^{-1}(S))$ there exists a rational curve $z\in C_{z}\subseteq q(p^{-1}(S))$.
\end{lemma}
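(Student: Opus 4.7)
The plan is to combine Hironaka's elimination of indeterminacy with the rational chain connectedness of fibers of proper birational morphisms from smooth varieties. By Hironaka, there is a proper birational morphism $\pi\colon \tilde{Y}\to Y$ from a smooth projective variety $\tilde{Y}$, obtained by repeatedly blowing up smooth centers contained in the indeterminacy locus (and its iterated exceptional divisors), such that $h := g\circ\pi\colon \tilde{Y}\to X$ is a morphism. Since $Z$ is the closure of the graph of $g$ in $Y\times X$, the map $(\pi,h)\colon \tilde{Y}\to Z$ is surjective. Moreover, for every $y\in S$ one has $p^{-1}(y) = \{y\}\times T_y$ with $T_y := q(p^{-1}(y))\subseteq X$, and $q$ restricts to the tautological isomorphism $p^{-1}(y)\to T_y = h(\pi^{-1}(y))$.

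Fix $z\in q(p^{-1}(S))$, pick $y\in S$ with $z\in T_y$, and let $F := \pi^{-1}(y)$. Since $p$ is birational and $y\in S$, the fiber $F$ is positive-dimensional and connected by Zariski's main theorem. A standard application of Mori's bend-and-break (see for instance \cite{Kol92}) shows that fibers of a proper birational morphism between smooth varieties are rationally chain connected; hence any two points of $F$ are joined by a chain of rational curves contained in $F$. Since $T_y = h(F)$ has positive dimension (as $T_y\cong p^{-1}(y)$), we may choose $\tilde{y}, \tilde{y}'\in F$ with $h(\tilde{y}) = z$ and $h(\tilde{y}')\neq z$, and link them by a chain of rational curves $C_1,\dots,C_k\subseteq F$.

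Let $j$ be the smallest index for which $h(C_j)$ is not contained in $\{z\}$; such an index exists because $h(\tilde{y}')\neq z$. Then $C_j$ contains both a point mapping to $z$ and a point mapping elsewhere, so $C_z := h(C_j)$ is a rational curve in $X$ passing through $z$ and contained in $h(F) = T_y\subseteq q(p^{-1}(S))$, as desired.

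The chief technical obstacle is ensuring that the rational curves produced in the smooth model $\tilde{Y}$ descend to honest rational curves in $X$ through the prescribed point $z$, rather than being contracted by $h$. The chain-of-rational-curves argument handles this automatically: the chain bridges $F\cap h^{-1}(z)$ with its complement in $F$, and the first link that exits $h^{-1}(z)$ necessarily maps to an uncontracted rational curve through $z$ in $q(p^{-1}(S))$.
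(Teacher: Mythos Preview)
Your argument is correct and follows the same outline as the paper: resolve the indeterminacy of $g$ by a sequence of blow-ups along smooth centers and then exploit the rationality of the exceptional locus to produce the curve $C_z$. The paper simply observes that each exceptional divisor of a smooth blow-up is a projective bundle, hence swept out by lines, and that some such line through a preimage of $z$ cannot be contracted by $h$; it then pushes that line down to $X$. You replace this ad hoc use of the projective-bundle structure by the cleaner statement that the fibers of a proper birational morphism between smooth varieties are rationally chain connected, and you use the chain to navigate from $h^{-1}(z)\cap F$ to its complement in $F$. This buys you a more transparent justification of the one point the paper glosses over, namely why the rational curve can be chosen both to pass through a preimage of $z$ \emph{and} not to be contracted by $h$; your ``first link leaving $h^{-1}(z)$'' device handles this explicitly. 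The cost is a mildly heavier input (rational chain connectedness of fibers), though for a tower of smooth blow-ups this is elementary to verify by induction anyway.
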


\begin{proof}
We resolve the indeterminacy of $g$ by blowing up $Y$ along smooth centers. In particular, we are blowing up smooth subvarieties contained in $S$.  Each point $z\in q(p^{-1}(S))$ is dominated by one the exceptional divisor of the blowup. Since the map is not defined on $S$, there exists a curve $C$ in the exceptional divisor which is not contracted in $X$. Since rational curves map only to rational curves, we deduce the result. 
\end{proof}

For more on the standard theory of rational maps we refer to \cite{KM}. Next, we can use the lemma to deduce the uniqueness of such toroidal compactifications.
\begin{proposition}\label{unico}
Let $X^{o}$ be a hyperbolic manifold of $\dim(X)\geq 2$ which admits a smooth toroidal compactification $(X, D)$. The pair $(X, D)$ is uniquely determined by $X^{o}$.
\end{proposition}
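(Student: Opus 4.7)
The plan is to take two smooth toroidal compactifications $(X_1,D_1)$ and $(X_2,D_2)$ of the same $X^o$ and show that the identity on $X^o$ extends to a biholomorphism between them, using Lemma \ref{defined}. Since $X^o$ is Zariski-open and dense in each projective variety $X_i$, the identity on $X^o$ induces a birational map $g\colon X_1 \dashrightarrow X_2$. Let $Z\subseteq X_1\times X_2$ be the closure of its graph with projections $p$ and $q$, and let $S\subseteq X_1$ be the locus where $g$ is not defined. Because $g$ is already regular (equal to the identity) on the open set $X^o\subseteq X_1$, one has $S\subseteq D_1$.

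The key step is to verify the inclusion $q(p^{-1}(S))\subseteq D_2$. Given $x\in S\subseteq D_1$, any point of $q(p^{-1}(x))$ is a limit in $X_2$ of points of the form $g(x_n)=x_n\in X^o$ for a sequence $x_n\to x$ in $X_1$. Such a sequence leaves every compact subset of $X^o$, so, by properness of $X_2$, its limit in $X_2$ must lie outside $X^o$, i.e.\ in $D_2$. Now Lemma \ref{defined} produces, through every point of $q(p^{-1}(S))$, a rational curve entirely contained in $q(p^{-1}(S))\subseteq D_2$. However, $D_2$ is by construction a disjoint union of Abelian varieties, which carry no rational curves; hence $q(p^{-1}(S))$, and therefore $S$, must be empty, so $g$ is a morphism.

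Finally, the rational inverse $g^{-1}\colon X_2\dashrightarrow X_1$ again restricts to the identity on $X^o$, so the very same argument, with the roles of $X_1$ and $X_2$ exchanged and with $D_1$ playing the role of $D_2$, shows that $g^{-1}$ is also a morphism; thus $g$ is a biholomorphism. Since $g|_{X^o}=\mathrm{id}$ and $g$ is proper, $g$ automatically maps $D_1=X_1\setminus X^o$ onto $D_2=X_2\setminus X^o$, giving the claimed uniqueness. I expect the main obstacle to be the inclusion $q(p^{-1}(S))\subseteq D_2$: one needs the closure-of-the-graph image of an indeterminacy point on the boundary to stay on the boundary, which is really a consequence of the fact that $g$ is the identity on $X^o$ together with the properness of $X_2$. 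Once this is in place, the non-existence of rational curves on Abelian varieties combined with Lemma \ref{defined} is exactly what forces $g$ to be everywhere defined.
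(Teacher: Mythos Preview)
Your proof is correct and follows essentially the same route as the paper's: set up the birational map $g$, note $S\subseteq D_1$ and $q(p^{-1}(S))\subseteq D_2$, invoke Lemma~\ref{defined} to produce a rational curve in $D_2$, and contradict the fact that $D_2$ is a union of Abelian varieties; then repeat for $g^{-1}$. The only difference is that you supply an explicit (analytic/sequential) justification for the inclusion $q(p^{-1}(S))\subseteq D_2$, which the paper simply asserts.
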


\begin{proof}
Suppose $X^{o}$ admits two toroidal compactifications $(X_{1},D_{1})$ and $(X_{2},D_{2})$. Since $X^{o}$ is by construction biholomorphic to 
$X_{1}\backslash D_{1}$ and $X_{2}\backslash D_{2}$, there exists a birational map $g: X_{1} \dashrightarrow X_{2}$. Moreover, if $S$ is the locus 
where $g$ is not defined, then $S\subseteq D_{1}$ and $q(p^{-1}(S))\subseteq D_{2}$, where $p$ and $q$ are defined in Lemma \ref{defined}. 
In particular, $D_{2}$ must contain a rational curve which is impossible. This shows that $g$ is everywhere defined and the same argument applied to $g^{-1}$ shows that $g$ must be an isomorphism. 
\end{proof}

In conclusion, the cusp closing construction of Hummel-Schroeder and Mok produces a canonical compactification. Of course, it is still possible that two nonisomorphic finite volume complex hyperbolic manifolds when compactified produce the same smooth projective variety $X$. The difference is then in the compactifying divisors say $D_{1}$ and $D_{2}$. For very explicit examples we refer to \cite{LucaS}.

\subsection{Bend and break and numerical dimension}\label{bend}

In this section, we recall some basic results from the theory of the minimal model. For completeness we recall here the precise statements 
of the results we are going to use in Section \ref{nefness}. For more details and the proofs of such results, we refer to \cite{KM}.\\

The first and probably most important result for us is bend and break, see Lemma 1.9 in \cite{KM}.

\begin{lemma}[Bend and break] \label{bend}
Let $X$ be a normal projective variety and $g_{o}:\pp^{1}\rightarrow X$ a non-constant morphism. Assume that there is a smooth 
connected (possibly non-proper) pointed curve  $0_{C}\in C$ and a morphism $G:\pp^{1}\times C \rightarrow X$ such that 
\begin{enumerate}
\item $G|_{\pp^{1}\times \left\{0_{C} \right\}}=g_{o}$,
\item $G(\left\{0\right\}\times C)=g_{0}(0)$, $G(\left\{\infty\right\}\times C)=g_{0}(\infty)$ and 
\item $G(\pp^{1}\times C)$ is a surface. 
\end{enumerate}

Then $(g_{o})_{*}(\pp^{1})$ is algebraically equivalent to either a reducible curve or a multiple curve. 
\end{lemma}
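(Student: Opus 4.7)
The plan is to build an auxiliary smooth projective surface on which the family becomes a morphism, and then derive the dichotomy from the Hodge index theorem.

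First, I would compactify the pointed curve $C$ to a smooth projective curve $\bar{C}$ by adding finitely many points, and extend $G$ to a rational map $\bar{G}\colon \pp^{1}\times\bar{C}\dashrightarrow X$ whose locus of indeterminacy is a finite subset of $\pp^{1}\times(\bar{C}\setminus C)$. Resolving the indeterminacy by a finite sequence of point blow-ups yields a smooth projective surface $\tilde{Y}$ with a birational morphism $\pi\colon \tilde{Y}\to \pp^{1}\times\bar{C}$ and a morphism $\tilde{G}\colon \tilde{Y}\to X$. Let $\pi_{B}\colon \tilde{Y}\to\bar{C}$ be $\pi$ composed with the second projection, and let $\sigma_{0},\sigma_{\infty}\subset \tilde{Y}$ be the strict transforms of the sections $\{0\}\times\bar{C}$ and $\{\infty\}\times\bar{C}$. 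Hypotheses (1)--(2) imply that $\tilde{G}$ contracts $\sigma_{0}$ to $g_{o}(0)$ and $\sigma_{\infty}$ to $g_{o}(\infty)$.

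Next I would argue by contradiction, assuming that $(g_{o})_{*}\pp^{1}$ is algebraically equivalent neither to a reducible 1-cycle nor to a multiple of a reduced irreducible curve. Every fiber $F_{b}=\pi_{B}^{-1}(b)$ is algebraically equivalent to the generic fiber, so $\tilde{G}_{*}F_{b}$ lies in the class of $(g_{o})_{*}\pp^{1}$; by the contradiction hypothesis this forces $\tilde{G}_{*}F_{b}$ to be irreducible and reduced for every $b\in\bar{C}$. In each reducible boundary fiber $F_{b}=\sum_{i}m_{i}E_{i}$ there is then exactly one distinguished component $E_{i_{0}}$, appearing with multiplicity one, that maps birationally onto the image curve, while every other $E_{i}$ is contracted by $\tilde{G}$ to a point. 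Fix an ample divisor $H$ on $X$ and set $L:=\tilde{G}^{*}H$. Hypothesis (3) ensures that the image of $\tilde{G}$ is two-dimensional, so $L$ is big and nef on $\tilde{Y}$ with $L^{2}>0$, and by construction $L\cdot\sigma_{0}=L\cdot\sigma_{\infty}=0$.

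Finally I would invoke the Hodge index theorem on $\tilde{Y}$: every nontrivial effective divisor that is $L$-orthogonal has strictly negative self-intersection, so $\sigma_{0}^{2}<0$ and $\sigma_{\infty}^{2}<0$. Inside $\pp^{1}\times\bar{C}$ both sections have self-intersection zero, so these self-intersections can only have dropped through blow-ups, which means $\bar{G}$ genuinely has indeterminacy at some points lying on each section. Tracing the chain of blow-ups over such a point and using the one-dominant-component analysis of each reducible fiber, one assembles $\sigma_{0}$, $\sigma_{\infty}$ and the contracted components of every boundary fiber into a connected effective divisor $\Gamma$ supported in fibers of $\pi_{B}$ with $L\cdot\Gamma=0$. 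The Zariski lemma for $\pi_{B}$---that the intersection form on the components of a connected fiber is negative semidefinite with kernel spanned by the fiber class---then provides a sharp constraint on $\Gamma^{2}$ that, when combined with $\sigma_{0}^{2}<0$, yields a numerical contradiction. The main obstacle is precisely this last combinatorial step: translating the local resolution data at each point of $\bar{C}\setminus C$ into a global intersection identity tight enough to clash with Hodge index. In practice this is cleanest after replacing $\tilde{Y}$ by a minimal model on which $\tilde{G}$ factors birationally through a resolution of its image surface, so that the Zariski lemma applies directly to the fibers of the induced fibration.
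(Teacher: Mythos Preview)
The paper does not prove this lemma at all; it is simply quoted from \cite{KM} (Lemma~1.9 there) as a black box, so there is no in-paper argument to compare against. Your overall strategy---compactify the base curve, resolve the indeterminacy on $\pp^{1}\times\bar C$, pull back an ample class, and exploit the Hodge index theorem on the two contracted sections---is exactly the classical route taken in \cite{KM}, and the conclusions $L^{2}>0$, $L\cdot\sigma_{0}=L\cdot\sigma_{\infty}=0$, hence $\sigma_{0}^{2}<0$ and $\sigma_{\infty}^{2}<0$, are correct and are the crux of the proof.

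The genuine gap is the last paragraph, which you yourself flag as ``the main obstacle''. As written it is internally inconsistent: you build $\Gamma$ out of $\sigma_{0}$, $\sigma_{\infty}$ and some fiber components, then assert $\Gamma$ is ``supported in fibers of $\pi_{B}$''. Sections are never supported in fibers, so Zariski's lemma on fiber components has nothing to say about $\Gamma$, and no numerical contradiction is actually produced. The clean finish does not go through a global $\Gamma$ at all. First arrange that no $(-1)$-curve on $\tilde Y$ is contracted by $\tilde G$ (contract any such; the map $\pi_{B}$ to the curve $\bar C$ survives automatically). Hodge index still gives $\sigma_{0}^{2}+\sigma_{\infty}^{2}<0$, whereas on a $\pp^{1}$-bundle any two sections satisfy $\sigma_{0}^{2}+\sigma_{\infty}^{2}=2\,\sigma_{0}\!\cdot\!\sigma_{\infty}\ge 0$; hence some fiber $F_{b_{0}}$ is reducible. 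Now use only the local combinatorics of that one fiber: it is a tree of $\pp^{1}$'s, every $(-1)$-component is (by our minimality choice) \emph{not} $\tilde G$-contracted, and an elementary check shows such a tree can never have a unique $(-1)$-component of multiplicity one. Consequently $\tilde G_{*}F_{b_{0}}$ is already reducible or non-reduced, which is the desired degeneration---no global Zariski bookkeeping is needed.
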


Next, we need a classification result for certain extremal contractions. Let us fix notation and be more precise. We denote by $\NE (X)$ the closure of the cone of effective 1-cycles in $X$ modulo numerical equivalence. Let $R$ be a $K_{X}$-negative extremal extremal ray in $\NE (X)$. In other words, given a curve $C$ in $X$ whose numerical class is such that $[C]\in R$,  then $K_{X}\cdot C <0$. We define the length of $R$ to be 
\bdism
 l(R):=\min\left\{-K_{X}\cdot C \:|\: \text{$C$ is a rational curve with numerical class in $R$}\right\}.
\edism
It follows from the cone theorem, see for example Theorem 1.24 in \cite{KM}, that $l(R)\leq n+1$, where $n$ is the dimension of $X$. Moreover, the same theorem implies that $K_{X}$-negative extremal rays 
can be contracted. More precisely, it ensures the existence of a projective variety $Y$ and morphism with connected fibers $\phi_{R}:X \rightarrow Y$, such that $\phi_{R}(C)$ 
is a point if and only if $[C]\in R$. Extremal contractions associated to extremal rays of low length have been successfully classified by Wi\'sniewski 
\cite{Wis2}. Before stating this result we need to recall that the dimension of the fibers of $\phi_{R}$ provides an upper bound 
on the length of the associated extremal ray. More precisely, in \cite{Wis2} it is proven the following:

\begin{theorem}[Wi\'sniewski]\label{length}
If $F$ is a nontrivial fiber of a contraction of $R$ then \bdism
\dim(F)\geq l(R)-1. \edism
\end{theorem}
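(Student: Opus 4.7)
The plan is to combine the deformation theory of rational curves on $X$ with the bend and break principle of Lemma \ref{bend}. Fix a nontrivial fiber $F$ of the contraction $\phi_{R}:X\rightarrow Y$ and a point $x\in F$. Because every fiber of $\phi_{R}$ is covered by rational curves of class in $R$ (a consequence of the cone and contraction theorems), we may select a rational curve $C\subseteq F$ passing through $x$ with $[C]\in R$ and with $d:=-K_{X}\cdot C$ as small as possible; in particular $d\geq l(R)$. Let $f:\pp^{1}\rightarrow X$ be the normalization of $C$, parametrized so that $f(0)=x$.

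The tangent space and obstruction space of the pointed Hom scheme at $[f]$ are $H^{0}(\pp^{1},f^{*}T_{X}(-1))$ and $H^{1}(\pp^{1},f^{*}T_{X}(-1))$; Riemann--Roch on $\pp^{1}$ therefore gives
\bdism
\dim_{[f]}\mathrm{Hom}(\pp^{1},X;\,0\mapsto x)\,\geq\,-K_{X}\cdot C\,=\,d.
\edism
Let $W$ be an irreducible component of this Hom scheme through $[f]$ with $\dim W\geq d$. Every $h\in W$ satisfies $h_{*}[\pp^{1}]=[C]\in R$, so the image $h(\pp^{1})$ is contracted by $\phi_{R}$; since it passes through $x\in F$, it lies entirely in $F$. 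Next consider the evaluation morphism $\mathrm{ev}_{\infty}:W\rightarrow X$, $h\mapsto h(\infty)$. The central claim is that every fiber of $\mathrm{ev}_{\infty}$ has dimension at most $1$. The subgroup of $\mathrm{Aut}(\pp^{1})$ fixing $\{0,\infty\}$ is $\mathbb{G}_{m}$, whose reparametrization action already contributes a $1$-dimensional orbit. If some fiber $\mathrm{ev}_{\infty}^{-1}(y)$ had dimension at least $2$, one could extract a smooth pointed curve $0_{C'}\in C'$ and a morphism $G:\pp^{1}\times C'\rightarrow X$ satisfying the three hypotheses of Lemma \ref{bend} with $G(\pp^{1}\times C')$ a genuine surface. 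Bend and break would then force $f_{*}[\pp^{1}]$ to be algebraically equivalent to a reducible curve $D_{1}+D_{2}$ or to a multiple curve $mD$; extremality of $R$ places the component classes inside $R$, so each component is rational and contracted by $\phi_{R}$, and one of them contains the limit point $x$. This produces a rational curve in $F$ through $x$ with class in $R$ and $-K_{X}$-degree strictly less than $d$, contradicting the minimality of $d$.

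With the fiber claim established, $\dim\mathrm{ev}_{\infty}(W)\geq\dim W-1\geq d-1\geq l(R)-1$, and the inclusion $\mathrm{ev}_{\infty}(W)\subseteq F$ yields $\dim F\geq l(R)-1$. The main obstacle is the bend and break step: one must carefully distinguish genuine $1$-parameter families of distinct image curves from reparametrizations by the $\mathbb{G}_{m}$-stabilizer of $\{0,\infty\}$, and then verify that the component of the broken limit which survives at $x$ is a rational curve in $F$ with strictly smaller anticanonical degree, so that the minimality of $d$ is truly violated.
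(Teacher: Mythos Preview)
The paper does not give its own proof of Theorem~\ref{length}; it is simply quoted from Wi\'sniewski \cite{Wis2}. What you have written is, in essence, the standard argument behind that result (and behind the sharper Ionescu--Wi\'sniewski inequality $\dim F+\dim\mathrm{Exc}(\phi_R)\geq n+l(R)-1$, of which the stated form is the special case $\dim\mathrm{Exc}(\phi_R)\leq n$). So there is nothing to compare against in the paper itself; your outline is the orthodox one.

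The one step you correctly single out as delicate can be closed using the very minimality hypothesis you imposed. If $f$ were a degree-$e$ cover of its image curve $C_0$ with $e\geq 2$, then $C_0$ would already be a rational curve through $x$ with class in $R$ and $-K_X\cdot C_0=d/e<d$, contradicting the minimality of $d$. Hence every map in $W$ is birational onto its image, and the morphisms in a fiber $\mathrm{ev}_\infty^{-1}(y)$ having a \emph{fixed} image curve form finitely many $\mathbb{G}_m$-orbits, so contribute at most one dimension. A fiber of dimension $\geq 2$ therefore forces a positive-dimensional family of \emph{distinct} image curves, the map $G$ sweeps out a surface, and Lemma~\ref{bend} applies. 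The broken limit is a tree of rational curves whose classes lie in $R$ by extremality; the component through $x$ then has $-K_X$-degree strictly smaller than $d$, giving the desired contradiction. With this point made explicit, your argument is complete.
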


We can now state the classification result for contractions of extremal rays with length at most two. 

\begin{theorem}[Wi\'sniewski]\label{conic}
Let $X$ be a smooth variety. Let $\phi_{R}: X\rightarrow Y$ be the
contraction of a $K_{X}$-negative extremal ray $R$ of $X$ such that $\dim(F)\leq 1$.
Then $Y$ is smooth and either
\begin{enumerate}
\item $\phi_{R}: X\rightarrow Y$ is a conic bundle, or
\item $\phi_{R}: X\rightarrow Y$ is the blow-up of the variety $Y$ along a smooth subvariety $Z$ of codimension $2$.
\end{enumerate}
\end{theorem}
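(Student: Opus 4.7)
The plan is to split into two cases according to the length $l(R)$, using Theorem \ref{length} to get $l(R)-1\leq\dim(F)\leq 1$, hence $l(R)\in\{1,2\}$. In both cases the strategy is the same: understand the structure of a typical fiber via the deformation theory of minimal rational curves in $R$ (with bend and break controlling how they can degenerate), promote this to a structural description of the exceptional locus, and finally deduce that $Y$ is smooth.

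Case $l(R)=2$. Here one shows $\phi_{R}$ must be of fiber type. Indeed, a birational contraction of a smooth variety with all fibers of dimension at most one is necessarily divisorial, and a local analysis near a one-dimensional fiber (together with adjunction) produces a rational curve in $R$ of anti-canonical degree $1$, contradicting the minimality of length. Hence $\phi_{R}$ is of fiber type and, since every fiber has dimension at most one, equidimensional of relative dimension one. Every fiber has anti-canonical degree $2$, and bend and break (Lemma \ref{bend}) forbids the decomposition of a fiber into components of smaller anti-canonical degree still in $R$; the only remaining possibilities are smooth conics, reduced nodal unions of two lines, and double lines. Flatness of $\phi_{R}$ follows from equidimensionality onto a Cohen-Macaulay base, and a detailed analysis of the relative dualizing sheaf of the resulting conic fibration (Ando's lemma) forces $Y$ to be smooth.

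Case $l(R)=1$. Now $\phi_{R}$ must be birational: a fiber-type contraction would give a family of rational curves of anti-canonical degree $1$ sweeping out $X$ and deforming in dimension $\geq\dim(X)-1\geq 2$, and bend and break would then force a degeneration whose components still lie in $R$ but have strictly smaller anti-canonical degree, which is impossible. Hence the exceptional locus $E$ is a divisor, every nontrivial fiber is a connected rational curve of arithmetic genus $0$ with $-K_{X}\cdot F=1$, and (again by bend and break applied component-by-component) must be a single reduced $\pp^{1}$. A normal-bundle computation on each fiber yields $N_{F/X}\cong\oo(-1)\oplus\oo^{\oplus n-2}$, which identifies $E$ with the projectivization of a rank-two vector bundle over $Z:=\phi_{R}(E)$; Nakano's classical blow-down criterion then provides the smooth $Y$ together with the identification of $X\to Y$ as the blow-up of $Y$ along the smooth codimension-two subvariety $Z$.

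The main obstacle in both cases is the passage from purely numerical data (anti-canonical degrees and fiber dimensions) to the sharp structural assertion that $Y$ is smooth: in the blow-up case this rests on pinning down the normal bundle of every fiber and invoking Nakano's criterion, while in the conic bundle case it requires a careful local analysis of how the three possible conic degenerations patch together over $Y$. These are precisely the delicate points where the smoothness of $X$ and the extremality of $R$ are used in full force in Wi\'sniewski's original argument.
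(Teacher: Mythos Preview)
The paper does not supply its own proof of this theorem: it is stated as a cited result of Wi\'sniewski (see \cite{Wis2}), introduced in Section~\ref{bend} with the sentence ``For more details and the proofs of such results, we refer to \cite{KM}.'' So there is nothing in the paper to compare your argument against; the authors simply quote the statement as a black box and use it in the proof of Theorem~\ref{nef}.

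That said, your outline is broadly the standard Ando--Wi\'sniewski approach, but your case division by length has a genuine gap. In the case $l(R)=1$ you argue that the contraction must be birational, claiming that a fiber-type contraction would give a covering family of rational curves of anti-canonical degree $1$ that bend-and-break would then degenerate. This is not correct: a conic bundle with non-empty discriminant locus has $l(R)=1$ (the components of a reducible fiber are lines with $-K_{X}\cdot\ell=1$), yet is of fiber type. The minimal curves of degree $1$ in that situation do \emph{not} sweep out $X$---they only cover the preimage of the discriminant---so your bend-and-break argument does not apply to them, and the general fiber (which does cover $X$) already has degree $2$ and need not break. The correct dichotomy is birational versus fiber type, not $l(R)=1$ versus $l(R)=2$; the length computation is a consequence of the structural classification, not the organizing principle of the proof.
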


Theorem \ref{conic} and bend and break are the main technical tools used in the proof of Theorem \ref{nef}. \\

We conclude this section by recalling the definition of the numerical dimension of a nef divisor.

\begin{definition}\label{ndim}
Let $D$ be a nef divisor. Its numerical dimension $\nu(D)$ is defined as 
\bdism
\nu(D):=\max_{k\in\nn}\left\{D^{k}\cdot A^{n-k}>0 \right\}.
\edism
where $A$ is any ample divisor. 
\end{definition}

It is easy to see that this definition does not depend on the choice of the ample line bundle $A$. Moreover, we recall that 
$\nu(D)$ is the greatest integer $k$ such that $[D^{k}]$ is not trivial in $H^{2}(X,\qq)$. 

The numerical dimension of a nef divisor is closely connected to its Kodaira dimension. It is possible to show that in general the following inequality holds $\kappa(D)\leq \nu(D)$. On the other hand, the numerical dimension is a better suited invariant, for example it is an invariant of the numerical class of $D$. For more information on these numerical invariants, we refer the reader to \cite{Lehmann} and the bibliography therein.

Finally, we recall that if $K_{X}$ is a nef divisor then it is expected that numerical dimension and Kodaira dimension agree. This statement is equivalent to the abundance conjecture and, in particular, to the existence of good minimal models, see \cite{GL} for more details.   

\section{Nefness of the canonical divisor}\label{nefness}

In this section, we prove Theorem \ref{nef}. Regarding the organization of the proof, we first address the nefness of the canonical class $K_{X}$ and then prove that $K_{X}+\alpha D$ is an ample $\rr$-divisor for all $\alpha\in (0, 1)$.

\begin{proof}[Proof of Theorem \ref{nef}] 

We first show that $K_{X}$ must be a nef divisor. By contradiction, let us assume this is not the case. 
Thus, there exists at least one $K_{X}$-negative extremal ray in $\NE (X)$. Let $R$ be such extremal ray in $\NE (X)$ and let $\phi_{R}: X\rightarrow Y$ be the associated contraction. Let us denote by $F$ an irreducible 
component of a non-trivial fiber of $\phi_{R}$. Let $C$ be an irreducible curve in $F$. 
Since $C$ is contracted by the contraction of $R$, the contraction theorem \cite{KM}, gives that $[C]\in R$. By Proposition \ref{positivity}, we know $K_{X}+D$ is nef. Since $R$ is $K_{X}$-negative, we have that $D\cdot C>0$. Let us denote by $\{D_{i}\}$ the smooth irreducible components of $D$. If $C\subseteq D_{i}$ for some $i$,
we conclude that $D\cdot C<0$ as the normal bundle of each component $D_{i}$ is
negative. In particular, by dimension counting we must have $\dim(D\cap F)=0$ which then implies
$\dim(F)\leq 1$. Finally, Theorem \ref{length} implies that  $l(R)\leq 2$. In conclusion, it is proved that any $K_{X}$- negative 
extremal ray in a smooth toroidal compactification of a ball quotient has length at most two. 

By Theorem \ref{conic}, we have that the extremal contraction produces a smooth variety $Y$
and we have the following possibilities for $\phi_{R}$:
\begin{enumerate}
\item $\phi_{R}: X\rightarrow Y$ is a conic bundle, or
\item $\phi_{R}: X\rightarrow Y$ is the blow-up of the variety $Y$ along a smooth subvariety $Z$ of codimension $2$.
\end{enumerate}

Instead of working on $X$, we will pass to the Baily-Borel compactification $X^{*}$. Recall that $X^{*}$ is the normal variety
obtained from $X$ contracting the components of the divisor $D$. We denote by $\pi:X\rightarrow X^{*}$ the 
contraction map. 

First assume that $\phi_{R}$ is a conic bundle. Let $C$ be a smooth curve in $Y$ and let $F\cong \pp^{1}$ be a smooth fiber of $\phi_{R}$ over a point $0_{C}$ contained in $C\subseteq Y$. Let $S$ be the ruled surface in $X$ over $C$. Replacing $C$ by an open set we can assume $S=\pp^{1}\times C$. In particular, we can define $G:\pp^{1}\times C \rightarrow X^{*}$ restricting $\pi$ to $S$. Recall that $F$ cannot be contained in $D$ as its smooth irreducible components are Abelian varieties. Since by construction the complement of $D$ in $X$ is hyperbolic, we must have that $F$ intersects $D$ in at least three distinct points. Thus, the points where $F$ meets $D$ determine different fixed  points for the family of rational curves defined by $G$. By Theorem \ref{bend} there must be a reducible fiber over $C$. Note that this implies that $\phi_{R}$ must have singular fibers. Since the discriminant locus of a conic bundle is a divisor on the base, see for example \cite{Sar82}, it has dimension at least one by our assumption. We are free to choose $C$ entirely contained in the discriminant locus. In particular, the irreducible components of the fibers over $C$ define a new family of rational curves on $X^{*}$, as we did before, with at least two fixed points. Applying again Theorem \ref{bend} we obtain that $\phi_{R}$ has a fiber with three irreducible components. This is a contradiction because every fiber of $\phi_{R}$ is isomorphic to a conic in $\pp^{2}$. 

It remains to show that $\phi_{R}$ cannot be the blow up along a smooth subvariety $Z\subseteq Y$ of codimension $2$. Let $E$ be the exceptional divisor of $\phi_{R}$ and recall that $E$ is a $\pp^{1}$-bundle over $Z$. Since we are assuming $dim(X)\geq 3$, we can always find a smooth curve $C\subseteq Z$. Let $F\cong \pp^{1}$ be a smooth fiber of $\phi_{R}$. By  eventually replacing $C$ with an open set, we can assume $S=\pp^{1}\times C$ and define a family of rational curves $G:\pp^{1}\times C\rightarrow X^{*}$ via the map $\pi$. Since by construction the complement of $D$ in $X$ is hyperbolic, we must have that $F$ intersects $D$ in at least three distinct points. Then by bend and break, $\phi_{R}$ must have a singular fiber. This is a contradiction because any non-trivial fiber of $\phi_{R}$ is a smooth curve. We showed that there are no negative extremal rays and thus $K_{X}$ is a nef divisor. 

To conclude the proof, we need to show that $K_{X}+\alpha D$ is an ample $\rr$-divisor for any $\alpha\in (0,1)$. We claim that it is enough show 
that $K_{X}+\alpha D$ is ample for all values of $\alpha$ close to one. In fact as shown in the first part of this proof, $K_{X}$ is always inside the closure of the ample cone. Now, the ample cone is convex so that if $K_{X}+\alpha D$ is ample for all $\alpha\in(1-\epsilon, 1)$ for some $\epsilon>0$, then it is necessarily ample for all $\alpha\in (0, 1)$. 

Let us show that $K_{X}+\alpha D$ is ample for all $\alpha\in(1-\epsilon, 1)$ for some $\epsilon>0$. Recall that because of  Proposition \ref{positivity} $K_{X}+D$ is big and nef. By Theorem 4.15 in \cite{DiCerbo}, we need to show that there are no curves $C$ in $X$ such that $(K_{X}+D)\cdot C=0$ and $K_{X}\cdot C\leq0$. Suppose $C$ is a curve such that $(K_{X}+D)\cdot C=0$. By Proposition \ref{positivity}, we know 
that $K_{X}+D$ is striclty nef outside $D$ which implies that $C\subseteq D$. On the other hand, the normal bundle of $D$ in $X$ is negative  which forces $K_{X}\cdot C=-D\cdot C>0$. The argument is complete.
\end{proof}

\begin{remark}
The proof of Theorem \ref{nef} fails in dimension two. The key point is that the blow-up operation for surfaces does not define a family of 
rational curves but rather a unique rigid curve. 
\end{remark}

\section{Applications}\label{Applications}

In this final section we collect some quite different applications of Theorem \ref{nef}. Let us start by showing that ampleness range given in Theorem \ref{nef} can be used to give an upper bound on the number of cuspidal ends of a complex hyperbolic manifold in terms of its Riemannian volume. This bound improves in all dimensions the one given in \cite{DiCerbo2}, see Theorem 1.5 therein. Recall that the Riemannian volume of the hyperbolic manifold $X^{o}$ can be computed in terms of the top self-intersection of the log-canonical of its smooth compactification $(X, D)$. More precisely, if we normalize the holomorphic sectional curvature to be $-1$, one has
\begin{align}\notag
\Vol(X^{o})= \frac{(4\pi)^{n}}{n!(n+1)^{n}}(K_{X}+D)^{n},
\end{align}
where $n$ is the dimension of $X$. Since the number of cusps are in one-to-one correspondence with the irreducible components of $D$, we can then state the theorem in the following form which is consistent with the results contained in \cite{DiCerbo2}.

\begin{theorem}\label{cusps}
Let $(X,D)$ be a toroidal compactification with $\dim(X)\geq 3$. Let $q$ be the number of irreducible components of $D$. 
Then
\bdism
q\leq (K_{X}+D)^{n}.
\edism
\end{theorem}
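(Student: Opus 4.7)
The plan is to reduce the inequality to an exact identity for $(K_{X}+D)^{n}$ and then invoke the nefness of $K_{X}$ provided by Theorem \ref{nef}. Write $D=D_{1}+\cdots+D_{q}$, where the $D_{i}$ are the disjoint irreducible components of $D$; each is an abelian variety with negative normal bundle. Adjunction combined with the disjointness of components gives $(K_{X}+D)|_{D_{i}}=K_{D_{i}}=0$, equivalently $K_{X}|_{D_{i}}=-D_{i}|_{D_{i}}$. Since $-D_{i}|_{D_{i}}$ is ample on the abelian variety $D_{i}$, the quantity
\[
c_{i}:=(-D_{i}|_{D_{i}})^{n-1}
\]
is a positive integer, so $c_{i}\geq 1$.

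The technical core is the identity
\[
(K_{X}+D)^{n}=K_{X}^{n}+\sum_{i=1}^{q}c_{i},
\]
which I would establish by expanding via the binomial theorem. The relation $D_{i}\cdot D_{j}=0$ for $i\neq j$ forces $D^{k}=\sum_{i}D_{i}^{k}$ for every $k\geq 1$, so the expansion splits across components and reduces the problem to evaluating $(K_{X}+D_{i})^{n}-K_{X}^{n}$ for each $i$ individually. Restricting every intermediate intersection to $D_{i}$ and substituting $K_{X}|_{D_{i}}=-D_{i}|_{D_{i}}$ turns that single-component difference into a multiple of $(D_{i}|_{D_{i}})^{n-1}$, and a short binomial identity collapses the coefficient so that the difference equals exactly $c_{i}$.

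With the identity in hand, Theorem \ref{nef} finishes the argument: since $K_{X}$ is nef, $K_{X}^{n}\geq 0$, and therefore
\[
(K_{X}+D)^{n}\geq \sum_{i=1}^{q}c_{i}\geq q.
\]
The only non-formal input is the nefness of $K_{X}$. This is precisely what fails in dimension two (where Hirzebruch's examples show $K_{X}$ need not be nef) and is the reason for the hypothesis $\dim(X)\geq 3$; everything else is algebraic bookkeeping, so there is no deeper obstacle.
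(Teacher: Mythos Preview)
Your argument is correct and structurally the same as the paper's: both establish the identity $(K_{X}+D)^{n}=K_{X}^{n}+(-1)^{n-1}D^{n}$ using $(K_{X}+D)|_{D}=0$, and then invoke $K_{X}^{n}\geq 0$ from the nefness of $K_{X}$ (Theorem \ref{nef}) to conclude $(K_{X}+D)^{n}\geq (-1)^{n-1}D^{n}=\sum_{i}c_{i}\geq q$.

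The one genuine difference is in how you bound $(-1)^{n-1}D^{n}$ from below by $q$. The paper uses the full strength of Theorem \ref{nef}, namely that $2(K_{X}+D)-D=2K_{X}+D$ is ample, and then observes $q\leq D\cdot(2L-D)^{n-1}=(-1)^{n-1}D^{n}$. You instead read off directly that each $c_{i}=(-D_{i}|_{D_{i}})^{n-1}\geq 1$ from the ampleness of the conormal bundle of $D_{i}$, which is a built-in feature of the toroidal compactification and requires nothing from Theorem \ref{nef}. Your route is therefore slightly more economical: it uses only the nefness of $K_{X}$, not the ampleness of $K_{X}+\alpha D$ for $\alpha\in(0,1)$. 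The paper's route, on the other hand, makes the role of the ample class $2K_{X}+D$ explicit, which is the same class reused in the proof of Proposition \ref{numerical}.
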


\begin{proof}
Let $L=K_{X}+D$. By adjunction $L_{|_{D}}=\mathcal{O}_{D}$, in other words $L$ restricted to $D$ is the trivial line bundle. Thus, for any $0<i<n$ we have the vanishing of the intersection numbers $L^{i}\cdot D^{n-i}$. Moreover, by Theorem \ref{nef}, we know 
that $2L-D$ is an ample divisor. In particular, 
\begin{align}\label{count}
q\leq D\cdot (2L-D)^{n-1}=(-1)^{n-1}D^{n}.
\end{align}
By Theorem \ref{nef}, we know that $K_{X}$ is nef  which then implies $K^{n}_{X}\geq 0$. Thus, we have
\begin{align}\notag
0\leq K^{n}_{X}=(L-D)^{n}=L^{n}+(-1)^{n}D^{n}
\end{align} 
which combined with (\ref{count}) gives
\begin{align}\notag
q\leq (-1)^{n-1}D^{n}\leq (K_{X}+D)^{n}.
\end{align} 
The proof is then complete.\\
\end{proof}

\begin{remark}
It is interesting to observe that the statement of Theorem \ref{cusps} is false for surfaces. In fact, Hirzebruch constructed a smooth compactification of a complex hyperbolic surface with four cusps such that $(K_{X}+D)^{2}=3$, see again \cite{Hirzebruch}.
\end{remark}

Remarkably, the simple computations given in the proof of Theorem \ref{cusps} can be also used to obtain a lower bound on the numerical dimension of $X$.

\begin{proof}[Proof of Proposition \ref{numerical}] 

We need to compute intersection products of $K_{X}$ with any fixed ample line bundle $A$. 
As shown in Theorem \ref{nef}, for any smooth compactification $(X, D)$ the divisor $2K_{X}+D$ is ample. Let us then choose $A=2K_{X}+D$, and in order to simplify the notation let us define $L:=K_{X}+D$. Thus, for any $0 \leq k\leq n$ we compute 
\bdism
K_{X}^{k}\cdot (2K_{X}+D)^{n-k}=(L-D)^{k}\cdot (2L-D)^{n-k}=2^{n-k}L^{n}+(-1)^{n}D^{n}.
\edism
Again by Theorem \ref{nef}, $K_{X}$ is always nef so that
\begin{align}\notag
0\leq K^{n}_{X}=L^{n}+(-1)^{n}D^{n}.
\end{align}
Combining these two inequalities we get 
\bdism
K_{X}^{k}\cdot (2K_{X}+D)^{n-k}\geq (2^{n-k}-1)L^{n},
\edism 
where the right hand side is strictly positive as long as $k\neq n$.  By Definition \ref{ndim}, we obtain that $\nu(X)\geq n-1$.\\
\end{proof}

\begin{remark}
Proposition \ref{numerical} is false in dimension $2$. In fact, some of the two dimensional smooth compactifications constructed in \cite{Hirzebruch} have zero numerical dimension.\\
\end{remark}

We conclude this section by showing that $K_{X}$ is ample up to a finite \'etale cover, see the statement of Theorem \ref{Mum}. The ampleness of $K_{X}$ is achieved by bootstrapping the positivity of the canonical divisor along a tower of coverings. In dimensions $n\geq 3$, the key for this argument is again Theorem \ref{nef}. In other words, the input for the bootstrap process is the nefness the canonical divisor at the base of the tower of coverings. For $n=2$, the argument cannot be performed because we cannot in general assume the base of the tower to have nef canonical divisor.  Nevertheless, the result still holds true as originally shown in \cite{DiC}.

\begin{proof}[Proof of Theorem \ref{Mum}] 

If the dimension of $X$ is $n=2$, we refer to Theorem A in \cite{DiC}. We also refer to the bibliography in \cite{DiC} for earlier partial results in this direction. Thus, from now on we assume $n\geq 3$.
Given a torsion free lattice $\Gamma\subseteq \textrm{PU}(n,1)$, let us consider a finite index subgroup $\Gamma'\leq \Gamma$ whose parabolic isometries have no rotational part. This is always possible as shown for example in \cite{Hummel2}. If $\Gamma$ is assumed to be arithmetic, then $\Gamma'$ can be any neat finite index subgroup, see for example \cite{Ash}. Thus $\mathcal{H}^{n}/\Gamma'$ admits a smooth toroidal compactification say $(X', D')$. Let us denote by $\Gamma_{i}$, say for $i=1, ..., m$, the maximal parabolic subgroups of $\Gamma'$. Recall that each $\Gamma_{i}$ is a co-compact torsion free lattice of left translations on the corresponding  horosphere  $\textrm{H}_{i}$. For any $i$, let us denote by $\alpha_{i}$ the generator of $\Gamma_{i}\cap C_{i}$, where $C_{i}$ is the center of $\textrm{H}_{i}$. Let us then consider the set of parabolic isometries $\mathcal{P}=\{\alpha_{1}, ..., \alpha^{l}_{1}, ..., \alpha_{m}, ..., \alpha^{l}_{m}\}$, where $l$ is a fixed integer $\geq 1$. Since any lattice in $\textrm{PU}(n,1)$ is residually finite, let us consider a finite index subgroup $\Gamma_{0}\leq \Gamma'$ such that $\Gamma_{0}\cap\mathcal{P}=\{0\}$. The finite \'etale map $f: \mathcal{H}^{n}/\Gamma_{0}\rightarrow \mathcal{H}^{n}/\Gamma'$ now extends to a branched covering map $p: (X_{0}, D_{0})\rightarrow (X', D')$, where $(X_{0}, D_{0})$ is a toroidal compactification of $\mathcal{H}^{n}/\Gamma_{0}$. The claim is now that $p$ branches along the whole divisor $D'$.  This follows from the fact that by construction the generators of the centers of the maximal parabolic subgroups in $\Gamma_{0}$ cover  at least of degree two the generators of the centers in $\Gamma'$. 

By the Hurwitz formula we have that $K_{X_{0}}=p^{*}K_{X'}+R_{0}$, where $R_{0}$ is possibly a non-reduced divisor whose support coincides with $D_{0}$. We claim that $K_{X_{0}}$ is necessarily strictly nef and big. Let us denote by $\{D_{i}\}$ the components of $D_{0}$. We then know that $R=\sum_{i}(r_{i}-1)D_{i}$ where by construction we have $r_{i}\geq 2$ for every $i$. Next, let us observe that because of Theorem \ref{nef}
\begin{align}\notag
K_{X_{0}}-\sum_{i}(r_{i}-1)D_{i}=p^{*}K_{X'}
\end{align}
is nef. Therefore, we have that
\begin{align}\notag
(p^{*}K_{X'})^{n}=(L-\sum_{i}r_{i}D_{i})^{n}=L^{n}+\sum_{i}r^{n}_{i}(-1)^{n}D^{n}_{i}\geq 0
\end{align}
where $L=K_{X_{0}}+\sum_{i}D_{i}$. Since $r_{i}\geq 2$ for any $i$, we then conclude that:
\begin{align}\notag
L^{n}>\sum_{i}(-1)^{n-1}D^{n}_{i}
\end{align}
which then implies that 
\begin{align}\notag
(K_{X_{0}})^{n}=(L-\sum_{i}D_{i})^{n}=L^{n}+\sum_{i}(-1)^{n}D^{n}_{i}>0.
\end{align}
Next, let us show that $K_{X_{0}}$ is strictly nef. By Theorem \ref{nef} the $\rr$-divisor $K_{X_{0}}+\alpha D_{0}$ is ample for any $\alpha\in (0, 1)$, thus for any curve $C\subset X_{0}$ which is entirely contained in $X_{0}\backslash D_{0}$ we have 
\begin{align}\notag
(K_{X_{0}}+\alpha D_{0})\cdot C=K_{X_{0}}\cdot C>0.
\end{align}
If the curve $C$ is now contained in $D_{0}$, we have
\begin{align}\notag
K_{X_{0}}\cdot C=-D_{0}\cdot C>0.
\end{align}
Finally, if $C$ is not contained in $D_{0}$ but it does intersect at least one of it s irreducible components we have
\begin{align}\notag
K_{X_{0}}\cdot C=(p^{*}K_{X'}+\sum_{i}(r_{1}-1)D_{i})\cdot C\geq \sum_{i}(r_{1}-1)D_{i}\cdot C>0.
\end{align}
Since $K_{X_{0}}$ is proven to be strictly nef and big, the base point free theorem, see Theorem 3.3 in \cite{KM}, implies that $K_{X_{0}}$ is indeed ample. For more details see for example Corollary 3.8 in \cite{DiCerbo}.

Next, it remains to show that for any $\Gamma_{1}\leq \Gamma_{0}$ the associated compactification $(X_{1}, D_{1})$ has ample canonical class.
Following the previous argument we have a finite map $p_{1}: (X_{1}, D_{1})\rightarrow (X_{0}, D_{0})$ which in general may or may not branch along $D_{0}$.
By the Hurwitz formula, we know that $K_{X_{1}}=p^{*}_{1}K_{X_{0}}+R_{1}$ where $R_{1}$ is a possibly non-reduced divisor whose support is contained in $D_{1}$. Now, $K_{X_{0}}$ is ample so that $p^{*}_{1}K_{X_{0}}$ intersect positively with any curve not entirely contained in $D_{1}$. The previous argument then gives that $K_{X_{1}}$ is ample. 
\end{proof}


\end{document}